\newtheorem{theorem}{Theorem}
\newtheorem{lemma}{Lemma}
\newtheorem{corollary}{Corollary}
\begin{document}
\title{Gr\"unbaum coloring and its generalization\\ to arbitrary dimension}

\author{\Large S. Lawrencenko, M.N. Vyalyi, L.V. Zgonnik}
%\date{August 24, 2012}
\maketitle

{\it \large Dedicated to the memory of our colleague

and friend, Dan Archdeacon.}
\medskip

\begin{abstract}
\noindent This paper is a collection of thoughts and observations, being partly a review and partly a report of current research, on recent work in various aspects of Gr\"unbaum colorings, their existence and usage. In particular, one of the most striking significances of Gr\"unbaum's Conjecture in the 2-dimensional case is its equivalence to the  4-Color Theorem.
The notion of Gr\"unbaum coloring is extended from the 2-dimensional case to the case of arbitrary finite hyper-dimensions.
\end{abstract}

{\bf Keywords:} coloring; triangulation; 2-manifold; n-manifold.

{\bf MSC Classification:} 05C15 (Primary), 

\indent \indent 05B05, 52C20, 52C22,  05B07, 57M20, 57M15 (Secondary).

\section{Introduction}
 
\noindent This paper is a collection of thoughts and observations, being partly a review and partly a report of current research, on recent work in various aspects of Gr\"unbaum colorings, their existence and usage. In particular, we recall some of Dan Archdeacon's thoughts on this subject, in particular, his reconsidering Gr\"unbaum colorings in dual form. In Section 2, we state our basic result. In Sections 3 and 4, we address the 2-dimensional case; in particular, one of the most striking significances of Gr\"unbaum's Conjecture (for the 2-sphere case) is its equivalence to the  {4-Color} Theorem \cite{AH, AHK}. In Section 5, we suggest how to construct new facet 2-colorable triangulations from old ones and expose infinite series of facet 2-colorable triangulations of the $n$-sphere for each $n$. In Section 6, we describe four applications of Gr\"unbaum colorability and facet 2-colorability in the case of general dimension. For notation and graph theory terminology we generally follow \cite{H, JT}.

A {\it vertex} [resp. {\it edge}] ({\it proper}) {\it $k$-coloring} of a triangulation $T$ is a surjection of the vertex set $V(T)$ [resp. edge set $E(T)$] onto a set of $k$ distinct colors such that the images of adjacent vertices [resp. edges] are different. 
The {\it vertex} [resp. {\it edge}] {\it chromatic numbers} of $T$ are defined to be the smallest values of $k$ possible to obtain corresponding $k$-colorings, and are denoted by $\chi(T)$ [resp.  $\chi^\prime(T)$]. The numbers $\chi(T)$ and $\chi^\prime(T)$  are also called the vertex and edge chromatic numbers of the graph $G(T)$  itself and denoted as $\chi(G(T))$  and $\chi^\prime(G(T))$,  respectively.

The term ``Gr\"unbaum coloring'' was introduced by Dan Archdeacon \cite{A-2}. The notion of {\it Gr\"unbaum coloring} was first introduced by Branko Gr\"unbaum \cite{G} as an assignment of three fixed colors to the edges of a triangulation $T$  of a closed compact 2-manifold so that the edges bounding every face of $T$ are assigned three different colors. Thus, a Gr\"unbaum coloring is a peculiar way of coloring edges which is different from the usual (proper) edge coloring. However, each Gr\"unbaum coloring of $T$ induces a Tait coloring of the graph $G^*(T)$  dual to the graph $G(T)$ of $T$. A {\it Tait coloring} is a usual edge 3-coloring of a 3-regular graph. 

By Vizing's Theorem \cite{V}, $\chi^\prime(G^*(T)) \in \{3, 4 \}$ and so four colors will certainly suffice for coloring the edges of any triangulation $T$ of any closed 
2-manifold so that the three edges have distinct colors around each face of $T$, which is mentioned in \cite{ JT, LM}. 
There are other types of cyclic colorations; for instance, one may color the faces of $T$ so that the face colors are all distinct around each vertex of $T$; see \cite{CL}.

\par\medskip 
\noindent {\bf Gr\"unbaum's Conjecture} (\cite{G}, 1969)~Every triangulation $T$  of an orientable closed 2-manifold is Gr\"unbaum colorable.
\par\medskip    

Gr\"unbaum believed that his Conjecture was true (personal communication to Dan Archdeacon), but Dan believed it was false \cite{A-2}. Gr\"unbaum's Conjecture stood for 40 years, until Martin Kochol \cite{Koch} constructed infinite families of counterexamples on orientable 2-manifolds with genus $g$ for all $g \geqslant 5.$ For the 2-sphere, Gr\"unbaum's Conjecture is true since it is equivalent to the 4-Color Theorem (Section 4). For the 2-torus, Gr\"unbaum's Conjecture has been proved by Albertson, Alpert, Belcastro, and Haas \cite{AABH} in the case where $\chi(T) \neq 5$.  The Conjecture is still open for the case $g =1$, $\chi(T) = 5$,  as well as for the cases $g \in \{2, 3, 4 \}$.

It has been known for a long time that Gr\"unbaum's Conjecture is false if extended to the nonorientable case (e.g., see \cite{A-2, JT}).
For instance, the minimal triangulation $T_{\min}$  of the projective plane by the complete 6-graph $G=K_6$  has the Petersen Graph as its dual graph $G^*(T_{\min})$ \cite{P} which has edge chromatic number equal to 4; see \cite{H, JT, LP, P}. The Petersen graph appeared to be the first known ``snark'' (Section 4).

The notion of Gr\"unbaum coloring is generalized in \cite{LM} for an arbitrary face size 
$\ell \geqslant 3$. In this paper we consider only triangulations ($\ell = 3$ in the case of dimension $n=2$) but generalize in another direction: we extend the notion of Gr\"unbaum coloring from the 2-dimensional case to the general case of arbitrary (finite) dimension $n\geqslant 1$.

By an {\it $n$-manifold} we always mean an $n$-dimensional compact closed manifold. Let $T$  be a triangulation of an $n$-manifold. Thinking of $T$  as the boundary complex of some $(n+1)$-polytope, we call the $n$-simplexes of $T$  {\it facets}. The number of $(n-1)$-faces in each facet of $T$  is equal to $(n+1)$.  We say that $T$  is {\it Gr\"unbaum hyper-colorable} if the set of $(n-1)$-simplexes of $T$  can be $(n+1)$-colored so that each facet of $T$ contains all the $(n+1)$  colors in its boundary complex. 

We say that $T$ is {\it facet 2-colorable} if all the facets of $T$ can be 2-colored, say black and white, so that any two facets that have a common $(n-1)$-face are colored differently. In the case of the smallest dimension $n=1$, the existence of a Gr\"unbaum hyper-coloring (that is, vertex coloring in this specific case) is equivalent to the existence of a facet 2-coloring (that is, edge coloring); either coloring exists if and only if $T$ is a 1-dimensional simple cycle of even length.

\section{Basic observation}

\noindent Following \cite{DH}, we say that two facets of $T$ are {\it $(n-1)$-adjacent} if they are incident with a common $(n-1)$-face. The $(n, n-1)$-adjacency graph of a complex $T$, written $A_{n, n-1}(T)$,
is the graph whose vertices are the facets of $T$,  two vertices of $A_{n, n-1}(T)$  being adjacent if the corresponding facets are $(n-1)$-adjacent. If $T$ is 1-dimensional (that is, a graph), then  $A_{1, 0}(T)$ is exactly the line graph of $T$ and is isomorphic to $T$. In \cite{DH}, the construction is more general; the authors introduce the notion of an $(n,m)$-adjacency graph for an arbitrary $n$-complex $K$, but in this paper we restrict attention to the case in which $K$ ($=T$) is a triangulation of a closed $n$-manifold and call $A_{n, n-1}(T)$ the {\it facet-adjacency graph} of $T$, abbreviating the notation to $A(T)$.

Since we assume that $T$ is a simplicial $n$-complex, it follows that $A(T)$  is a simple graph, that is, $A(T)$ does not have loops and does not have parallel edges. Gr\"unbaum coloring was originally introduced in the case of dimension $n=2$, in which case $A(T)$ is the dual 3-regular graph.

\begin{theorem}
Given a triangulation  $T$  of an arbitrary closed $n$-manifold ($n \geqslant 1$), then facet 2-colorability of  $T$  entails  Gr\"unbaum hyper-colorability of  $T$. 
\end{theorem}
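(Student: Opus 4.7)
The plan is to recast Grünbaum hyper-colorability as a proper edge-coloring problem on the facet-adjacency graph $A(T)$, and then invoke König's edge-coloring theorem for bipartite graphs. Two preliminary observations drive this reduction. First, because $T$ triangulates a closed $n$-manifold, every $(n-1)$-simplex of $T$ is incident to exactly two facets, so the edges of $A(T)$ stand in natural bijection with the $(n-1)$-simplexes of $T$. Second, each facet of $T$ is an $n$-simplex and therefore contains exactly $n+1$ many $(n-1)$-faces, which shows that $A(T)$ is $(n+1)$-regular.

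Next I would observe that the hypothesis of facet 2-colorability is, by definition, a proper 2-vertex-coloring of $A(T)$, and hence amounts to saying that $A(T)$ is bipartite. Combining bipartiteness with $\Delta(A(T))=n+1$, König's edge-coloring theorem produces a proper edge-coloring of $A(T)$ using exactly $n+1$ colors. Pulling this back through the bijection assigns one of $n+1$ colors to each $(n-1)$-simplex of $T$.

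To finish, I would verify that this assignment is a Grünbaum hyper-coloring. Two $(n-1)$-simplexes sharing a common facet $F$ correspond to two edges of $A(T)$ both incident to the vertex $F$, and hence receive distinct colors by properness of the edge coloring. Since each facet has exactly $n+1$ incident $(n-1)$-simplexes, all receiving pairwise distinct colors drawn from a palette of size $n+1$, every color must appear in each facet, as required. The only step with any real content is the regularity claim, which genuinely uses the closed $n$-manifold hypothesis to guarantee that every $(n-1)$-simplex lies in exactly two facets (not fewer, not more), together with the simplicity of $A(T)$ noted in the paper to rule out parallel edges. Once these are in hand, König's theorem furnishes the coloring immediately, and the $n=2$ specialization recovers the familiar fact that a bipartite cubic dual graph is Tait colorable.
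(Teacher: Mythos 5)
Your proposal is correct and follows essentially the same route as the paper: both arguments identify facet 2-colorability with bipartiteness of the $(n+1)$-regular facet-adjacency graph $A(T)$ and then apply K\"onig's edge-coloring theorem to obtain a proper edge $(n+1)$-coloring, which transfers to the $(n-1)$-simplexes. Your write-up merely makes explicit the bijection between $(n-1)$-simplexes and edges of $A(T)$ and the regularity count, details the paper leaves implicit.
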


\begin{proof}
The proof is an application of a classical theorem of K\"onig \cite{Ko, JT, LP} in the right place at the right time. As observed in \cite{LM}, K\"onig's Theorem entails that each bipartite $\ell\mbox{-}$regular graph is edge $\ell$-colorable. Since a graph is bipartite if and only if it is vertex 2-colorable, it follows that if $T$ is facet 2-colorable, then $A(T)$  is an $(n+1)$-regular bipartite graph and by K\"onig's Theorem admits an edge $(n+1)$-coloring which naturally induces a Gr\"unbaum hyper-coloring of $T$. 
\end{proof}

The 2-dimensional version ($n=2$) of Theorem~1 was established in \cite {LM} in a more general setting of $\ell\mbox{-}$angulations. 

\section{Complete graphs}

\noindent Dan Archdeacon was very interested in Gr\"unbaum colorings; in particular, he asked \cite{A-2} if Gr\"unbaum's Conjecture was true for complete graphs. 
It is a conjecture of Mohar \cite{M} that the answer to this question is yes.
As shown in \cite{LM}, a combination of known results by Ringel \cite{R}, Youngs \cite{Y}, Grannell, Griggs, and \v{S}ir\'a\v{n} \cite{GGS}, Grannell and Korzhik \cite{GKo} on facet (2-face) 2-colorability along with Theorem~1 leads to the following corollary. 

\begin{corollary}
For each $p\equiv 3$ or $7$ {\rm (mod 12)},  there exists a Gr\"unbaum colorable orientable 2-dimensional triangulation by the complete graph $K_p$. Furthermore, for each  $p\equiv 1$ or $3$ {\rm (mod 6)}, $p \geqslant 9$,  there exists a Gr\"unbaum colorable nonorientable triangulation by $K_p$.
\end{corollary}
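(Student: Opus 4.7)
The plan is to combine Theorem~1 with four known existence results for face 2-colorable triangular embeddings of complete graphs. By Theorem~1 applied in dimension $n=2$, any triangulation whose triangles admit a proper 2-coloring (equivalently, whose dual 3-regular graph $G^*(T)$ is bipartite) is automatically Gr\"unbaum colorable. Thus the corollary reduces to pointing, for each residue class listed in the statement, to the existence of such a face 2-colorable triangulation of a closed 2-manifold by the complete graph $K_p$.

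For the orientable claim, I would appeal to the classical Heawood-bound triangular embeddings of $K_p$ due to Ringel \cite{R} and Youngs \cite{Y}, which for $p\equiv 7$ (mod $12$) already furnish embeddings with bipartite dual, together with the more recent constructions of Grannell, Griggs, and \v{S}ir\'a\v{n} \cite{GGS}, which produce face 2-colorable orientable triangular embeddings of $K_p$ in the remaining residue class $p\equiv 3$ (mod $12$). Handed to Theorem~1 one at a time, these cover both orientable congruences.

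For the nonorientable claim, the relevant ingredient is the construction of Grannell and Korzhik \cite{GKo}, which yields a nonorientable face 2-colorable triangular embedding of $K_p$ for every $p\equiv 1$ or $3$ (mod $6$) with $p\geqslant 9$; again Theorem~1 converts each such embedding into the desired Gr\"unbaum coloring. The lower bound $p\geqslant 9$ simply records the fact that the small exceptional cases fall outside the scope of that construction; in particular, for $K_6$ triangulating the projective plane the dual is the Petersen graph, which is not even 3-edge-colorable, ruling out face 2-colorability at $p=6$.

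No essentially new difficulty arises in the proof itself: the entire mathematical content was absorbed by the cited construction theorems, and the only step requiring care is the bookkeeping needed to match the congruence conditions stated in the corollary exactly to the coverage provided by those theorems. If there is a genuine obstacle, it is hidden inside \cite{GGS} and \cite{GKo}, where the face 2-colorable embeddings are built by intricate current-graph and surgery arguments; but none of this has to be reproduced here, the verification of the corollary being merely a matter of assembling the right references in the right place.
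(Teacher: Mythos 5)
Your proposal is correct and is essentially identical to the paper's own treatment: the paper likewise derives the corollary by feeding the known face 2-colorable triangular embeddings of $K_p$ (Ringel, Youngs for the orientable residues, Grannell--Griggs--\v{S}ir\'a\v{n} for $p\equiv 3$ (mod 12), and Grannell--Korzhik for the nonorientable case) into Theorem~1, citing \cite{LM} for the assembly. Your attribution of which reference covers which congruence class matches the standard record, so nothing is missing.
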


Corollary 1 establishes the existence of Gr\"unbaum colorable triangulations on orientable and nonorientable surfaces by complete graphs $K_p$  for at least half of the residue classes in the spectrum of possible values of $p$.

As noted in \cite{LM}, the property of being facet 2-colorable is an isomorphism invariant of a triangulation. (Two triangulations are called isomorphic provided that there exists a bijection between their vertex sets that preserves faces.) The first examples of non-isomorphic orientable triangulations by the same complete graph were constructed by Youngs \cite{Y}. In all those examples, the non-isomorphism is due to the fact that one of the triangulations in each pair is facet $2\mbox{-}$colorable while the other is not (see review \cite{GG} for details). After a quarter of a century, in \cite{LNW}, there was constructed an example of {\sl more than two} pairwise non-isomorphic orientable triangulations (all by the same complete graph), namely: There were constructed {\sl three} such triangulations, only one of which is facet $2\mbox{-}$colorable; thus, \cite{LNW} provided historically the first example of two non-isomorphic, {\sl facet non-2-colorable} triangulations by the same complete graph. In 2000, it was shown by Bonnington, Grannell, Griggs, and \v{S}ir\'a\v{n} \cite{BGGS} (also see \cite{GG}) that the number of non-isomorphic orientable triangulations by the graph $K_p$  actually grows very rapidly as $p \to \infty$~ even within the restricted class of face $2\mbox{-}$colorable triangulations; for instance, when $p\equiv 7$ or $19$ (mod $36$), that number is not less than $2^{p^2/54-o(p^2)}$. 

As also noted in \cite{LM}, another corollary is obtained by a combination of related results of Ringel and Youngs \cite{RY}, Grannell, Griggs, and Knor \cite{GGK} (also see \cite{GG}) on facet 2-colorability along with Theorem~1: {\it For each $p \geqslant 2$,  all orientable triangulations by the complete tripartite graph $K_{p,p,p}$  are Gr\"unbaum colorable.} However, a roundabout method sometimes produces a trivial result. A direct simple proof was given by Dan Archdeacon, which produces a stronger result: {\it All orientable and nonorientable triangulations by any tripartite graphs (complete or not) are Gr\"unbaum colorable.} Dan's proof \cite{A}: If the vertex parts are $A$, $B$, $C$, then it suffices to color the edges between $A$  and $B$  red, those between $B$  and $C$  blue, and those between $A$  and $C$  green.

Archdeacon's result mentioned in the preceding paragraph is not a sort of ``on spherical chickens in a vacuum'' as it may seem at first sight. As shown in \cite{GGKS} (see also \cite{GG}) in the case $p$  is prime, there already exist at least $(p-2)!/(6p)$  non-isomorphic orientable triangulations by $K_{p,p,p}$.  Furthermore, \cite{GKn} provides improved bounds on the number of such triangulations; for instance, when $p\equiv 6$ or $30$ (mod $36)$,  there exist at least $p^{p^2/144-o(p^2)}$  non-isomorphic orientable triangulations by $K_{p,p,p}$.

\section{The 4-Color Theorem and snarks}

\begin{theorem} [\bf Tait]
In the 2-sphere case Gr\"unbaum's Conjecture is equivalent to the 4-Color Theorem. 
\end{theorem}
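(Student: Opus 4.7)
The plan is to combine the duality between a triangulation and its dual graph with Tait's classical 1880 reduction of the 4-Color Problem to edge $3$-colorings of cubic planar graphs. The argument splits naturally into a ``dictionary'' step translating between $T$ and $G^*(T)$, and a ``Tait equivalence'' step relating Tait colorings of $G^*(T)$ to proper $4$-face-colorings, which in turn is tied to the 4-Color Theorem itself.

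First I would set up the bijective correspondence between Gr\"unbaum colorings of a triangulation $T$ of $S^2$ and Tait colorings of the dual $G^*(T)$. Because $T$ is a simplicial triangulation of the $2$-sphere, $G^*(T)$ is a planar $3$-regular graph, and every edge of $T$ bounds two distinct triangles, so $G^*(T)$ has no bridges. Each edge of $T$ crosses a unique edge of $G^*(T)$, and two edges of $T$ share a face of $T$ if and only if their duals share the corresponding vertex of $G^*(T)$. Hence a $3$-coloring of $E(T)$ in which every triangle receives all three colors is exactly a proper edge $3$-coloring of $G^*(T)$. Therefore Gr\"unbaum colorability of every triangulation of $S^2$ is equivalent to the statement that every bridgeless cubic planar graph admits a Tait coloring.

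Next I would invoke Tait's equivalence between Tait colorings and $4$-face-colorings of bridgeless cubic planar graphs, using the Klein four-group $K = \mathbb{Z}_2 \oplus \mathbb{Z}_2$. Given a proper $4$-face-coloring by elements of $K$, color each edge by the sum of its two incident face colors; at each cubic vertex the three incident edge colors sum to $0$ in $K$ and are all nonzero, so they form the three nonzero elements of $K$ and give a Tait coloring. Conversely, given a Tait coloring by the three nonzero elements of $K$, one defines face colors by summing edge colors crossed along a path from a fixed base face, and well-definedness reduces to showing the sum along every closed dual walk is $0$, which follows by induction on the number of faces enclosed (using planarity and that each cubic vertex contributes sum $0$) together with bridgelessness.

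Finally I would close the loop by reducing the general 4-Color Theorem to the bridgeless cubic case: any planar map can be made bridgeless by suppressing bridges and made cubic by truncating each vertex of degree $\geq 4$ into a small face, and a proper $4$-face-coloring of the modified map descends to the original. Combining the three steps yields the equivalence. The main obstacle is the well-definedness of the Klein-group face coloring produced from a Tait coloring; this is the single point at which planarity and the absence of bridges are essentially used, and it is precisely the step that fails to generalize to higher genus, matching the fact that Gr\"unbaum's Conjecture can fail there.
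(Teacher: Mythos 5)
Your proposal is correct and follows essentially the same route as the paper: both directions pass through the duality between Gr\"unbaum colorings of $T$ and Tait colorings of the cubic planar dual graph, together with the Klein-four-group translation between Tait colorings and $4$-colorings (the paper states this translation in primal form as its folklore Lemma and simply cites Tait's classical observation for the converse direction, whereas you prove both halves of Tait's equivalence explicitly). The one point to watch is your intermediate assertion that Gr\"unbaum colorability of all sphere triangulations is outright equivalent to Tait colorability of \emph{all} bridgeless cubic planar graphs---not every such graph is the dual of a simplicial triangulation (e.g.\ those with $2$-edge-cuts are not)---but since your final reduction step ties everything back to the 4-Color Theorem, the chain of implications still closes and the claimed equivalence follows.
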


To prove that  Gr\"unbaum's Conjecture implies the 4-Color Theorem, Tait observed \cite{T, BLW} that the 4-Color Theorem in the dual formulation is equivalent to the statement that every simple planar 3-regular bridgeless graph has a Tait coloring; see also \cite{BM, GS, SK}. The converse implication easily follows by the following folklore result.

\begin{lemma} [\bf folklore, see \cite{AABH}]
Each vertex 4-colorable triangulation $T$ of any 2-manifold is Gr\"unbaum colorable.
\end{lemma}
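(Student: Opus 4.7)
The plan is to construct an explicit Grünbaum edge 3-coloring directly from a given proper vertex 4-coloring. I would first fix a proper vertex 4-coloring $c: V(T) \to \{1,2,3,4\}$, so that for every edge $uv \in E(T)$ the unordered pair $\{c(u), c(v)\}$ is one of the six 2-element subsets of $\{1,2,3,4\}$.

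Next I would exploit the fact that $K_4$ has exactly three perfect matchings, corresponding to the three partitions of $\{1,2,3,4\}$ into two disjoint pairs: $M_1 = \{\{1,2\},\{3,4\}\}$, $M_2 = \{\{1,3\},\{2,4\}\}$, and $M_3 = \{\{1,4\},\{2,3\}\}$. Each of the six possible endpoint-color pairs lies in exactly one $M_i$, so I define an edge coloring $\varphi: E(T) \to \{1,2,3\}$ by sending $e = uv$ to the unique index $i$ for which $\{c(u), c(v)\} \in M_i$.

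The core step is to verify that $\varphi$ is a Grünbaum coloring. Let $\Delta$ be a face of $T$. Since $T$ is a triangulation and $c$ is proper, the three vertices of $\Delta$ receive three distinct colors $\alpha, \beta, \gamma \in \{1,2,3,4\}$, and the three edges of $\Delta$ are assigned the pairs $\{\alpha,\beta\}$, $\{\alpha,\gamma\}$, $\{\beta,\gamma\}$. Any two of these pairs share a color, hence cannot both lie in the same $M_i$ (whose two members are disjoint). Therefore the three edges of $\Delta$ receive three different values of $\varphi$, as required.

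I do not anticipate a significant obstacle here: the argument rests on an elementary combinatorial fact about $K_4$ together with the observation that every face of a triangulation is a $3$-cycle. The orientability or genus of the underlying 2-manifold plays no role; only the simplicial structure of $T$ and the existence of a proper vertex 4-coloring are used, which is precisely why the lemma holds uniformly across \textbf{all} closed 2-manifolds.
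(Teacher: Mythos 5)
Your proposal is correct and is essentially the paper's own proof: the three perfect matchings $M_1, M_2, M_3$ of $K_4$ are exactly the paper's three edge color classes (red on $A\mbox{-}B$ and $C\mbox{-}D$ edges, blue on $A\mbox{-}C$ and $B\mbox{-}D$, green on $A\mbox{-}D$ and $B\mbox{-}C$), and the verification on each face is the same observation that the three endpoint-color pairs of a properly colored triangle cannot lie in a common matching. Your phrasing via the 1-factorization of $K_4$ is just a cleaner way of stating the same construction.
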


\begin{proof}
The proof is very short and disarmingly simple. We reproduce it here, inspired by Archdeacon's proof of his proposition in Section 3. 
The vertex 4-coloring enables partitioning the vertex set of $T$  into four disjoint vertex color classes
denoted by $A$, $B$, $C$, and $D$.  We then color the edges between $A$  and $B$  and also the ones between $C$  and $D$  red, those between $A$  and $C$  and also between $B$  and $D$  blue, and those between $A$  and $D$  and between $B$  and $C$ green, and we are done because any cycle of $T$ with length 3 has edges of all three colors.
\end{proof}

In the remainder of this section let $G^*$ denote any simple (not necessarily planar) 3-regular bridgeless graph. (The reader is urged not to be misled by the $*$ in the superscript of $G$; the asterisk is used in this section only for consistency with Section 1 but without emphasis on ``dual graph''.) It is a conjecture of Tutte \cite{Tutte1, Tutte2} that if $G^*$ has no subgraph which is a subdivision of the Petersen Graph, then $G^*$ is Tait colorable. Some partial results towards Tutte's Conjecture have been established, e.g. \cite{Seymour-2, E}.

As observed by Seymour \cite{Seymour}, a conjecture of Gr\"otzsch implies that if $G^*$ has a vertex $v$ so that $G^*-v$ is planar, then $G^*$ is Tait colorable. 
This statement is less general than Tutte's Conjecture because the graph resulting from the Petersen Graph by deleting any one vertex is nonplanar; see \cite{JT}.  Following Gardner \cite{Gardner}, if $G^*$ is not Tait colorable, $G^*$ is called a {\it snark}. By Vizing's Theorem,  $G^*$ is a snark if and only if $\chi^{\prime} (G^{*}) = 4$.

The term ``snark'' is motivated by the rarity of snarks (\cite{Descartes}). In fact, the first infinite family of snarks was constructed by Isaacs \cite{Isaacs}. 
Later on, several more such families were found (see the survey \cite{Watkins}).
However, the Tait colorability problem is NP-complete (Holyer \cite{Holyer}), and so the snark recognition is coNP-complete. Thus, it seems pretty unlikely that a reasonably simple characterization
of snarks would ever be available.

Dan Archdeacon was very interested in snarks; see \cite{A-2}. An embedding of a graph on a 2-manifold is called {\it polyhedral} if any two distinct facets share at most a vertex or edge in their boundaries. It is an observation of Dan that Gr\"unbaum's Conjecture is equivalent to the following statement: No snark has a polyhedral embedding. 
Dan, together with his friends Paul Bonnington and Jozef \v{S}ir\'a\v{n}, found and examined many of the known snarks and showed that they had no polyhedral embedding. Their search was helped by searching for so-called {\it poly-killers}, that is, minimal subgraphs with the property that they cannot occur in any graph with a polyhedral embedding. 

As a possible direction towards partially proving Gr\"unbaum's Conjecture, Dan conjectured \cite{A-2} that a Gr\"unbaum coloring exists for those triangulations with every noncontractible cycle of length at least $l$ for some large enough $l$ depending on the 2-manifold. In connection with this we mention a conjecture by Jaeger and Swart \cite{JS} that every snark has a cycle with length at most 6. This conjecture was disproved by Kochol (see \cite[p. 198]{JT}), who constructed snarks with arbitrarily long shortest cycles.

In conclusion of this section we draw attention to even (or Eulerian) triangulations of 2-manifolds. By Theorem 1, every even triangulation of the 2-sphere is Gr\"unbaum colorable. 
This has been generalized for the 2-torus, projective plane and Klein bottle in \cite{KMMNNOV}, which apparently required the combined efforts of seven co-authors.
Moreover, they have proved that every triangulation of a fixed 2-manifold with sufficiently large representativity (depending on the 2-manifold) is Gr\"unbaum colorable, thus having proven Dan Archdeacon's conjecture (mentioned in the preceding paragraph) for all even triangulations.

\section{Generation of facet 2-colorable triangulations}

\noindent Little is known about facet 2-colorable triangulations of $n$-manifolds. Aside from the authors of this paper, the facet 2-colorability of the first barycentric subdivision of any orientable closed triangulation was also observed by Schleimer \cite {S-2}.

\begin{corollary}
The first barycentric subdivision of any triangulation of any closed orientable $n$-manifold is a facet 2-colorable, and thereby Gr\"unbaum hyper-colorable, triangulation. 
\end{corollary}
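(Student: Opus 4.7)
The plan is to realize facets of the first barycentric subdivision $T'$ as pairs $(F,\pi)$, where $F$ is a facet of the original triangulation $T$ and $\pi=(v_0,v_1,\ldots,v_n)$ is an ordering of the $n+1$ vertices of $F$. Under this description, such a facet of $T'$ corresponds to the maximal flag $\sigma_0<\sigma_1<\cdots<\sigma_n$ in which $\sigma_i$ is the subsimplex spanned by $\{v_0,\ldots,v_i\}$. Two facets of $T'$ share an $(n-1)$-face if and only if their flags differ in exactly one position $i\in\{0,1,\ldots,n\}$, and this splits into two cases: if $i\le n-1$, the two flags agree on $\sigma_{i-1}$ and $\sigma_{i+1}$, so they share the top facet $F$ and the orderings of $F$'s vertices differ by the adjacent transposition of $v_i$ and $v_{i+1}$; if $i=n$, the shared part is $\sigma_{n-1}$, and the two flags extend it through the two distinct facets $F,F'$ of $T$ incident to $\sigma_{n-1}$, so the orderings coincide in the first $n$ entries and differ only in the last.

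Next I invoke the orientability of $T$: fix a coherent orientation, i.e.\ a choice, for every facet $F$, of an equivalence class $[F]$ of orderings of its vertices modulo even permutations, such that any two facets sharing an $(n-1)$-face induce opposite orientations on it. Define the color of $(F,\pi)$ in $T'$ to be $+1$ if $\pi$ represents $[F]$ and $-1$ otherwise. In the first adjacency case, the two orderings differ by a single transposition while $F$ is unchanged, so the signs are opposite. In the second case, the sign of $(v_0,\ldots,v_{n-1},v_n)$ with respect to $[F]$ determines which of the two orientations of $\sigma_{n-1}$ is induced by $F$; by coherence, $F'$ induces the opposite orientation on $\sigma_{n-1}$, which is exactly what flips the sign of $(v_0,\ldots,v_{n-1},v_n')$ with respect to $[F']$. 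Thus adjacent facets of $T'$ receive opposite colors, $T'$ is facet $2$-colorable, and Gr\"unbaum hyper-colorability then follows from Theorem~1.

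The substance of the argument lies in the second adjacency case; the first is a purely combinatorial property of barycentric subdivisions that needs no topology. The hard part will be to verify that the sign convention is well defined (invariant under even permutations within $[F]$) and that coherence of the global orientation of $T$ is precisely what forces the color change across two facets of $T$ sharing an $(n-1)$-face. This is also where the orientability hypothesis is indispensable for this method: along a non-orientable loop in $T$, the proposed sign function on facets of $T'$ would inevitably become inconsistent, so a more delicate argument would be required to address the nonorientable case.
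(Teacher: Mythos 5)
Your argument is correct, and it supplies a proof that the paper itself omits: Corollary~2 is stated with only an attribution (``the facet 2-colorability of the first barycentric subdivision of any orientable closed triangulation was also observed by Schleimer''), so there is no in-paper proof to compare against. What you give is the standard argument, and it is sound in all its parts. The identification of facets of $T'$ with maximal flags $\sigma_0<\cdots<\sigma_n$, equivalently with ordered facets $(F,\pi)$, is correct, as is the dichotomy for $(n-1)$-adjacency in $T'$ (an adjacent transposition within $F$ when the flags differ at position $i\leqslant n-1$, versus crossing to the unique other facet $F'$ through $\sigma_{n-1}$ when $i=n$; the latter uses the pseudomanifold property of a closed-manifold triangulation, which you should cite explicitly). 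The two verifications you defer do go through: the sign of $\pi$ relative to $[F]$ is well defined precisely because $[F]$ is a class modulo \emph{even} permutations, and in the second adjacency case the orderings $(v_0,\dots,v_{n-1},v_n)$ and $(v_0,\dots,v_{n-1},v_n')$ induce the \emph{same} orientation class on $\sigma_{n-1}$ under the ``omit the last vertex'' convention, so coherence of the global orientation forces them to have opposite signs relative to $[F]$ and $[F']$ respectively. Your closing remark is also on target and worth keeping: the parity of a dual walk in $A(T')$ lifted from a loop in $A(T)$ equals the parity of the associated permutation, which is odd exactly for orientation-reversing loops, so facet 2-colorability of the barycentric subdivision actually \emph{characterizes} orientability; this shows the hypothesis cannot be dropped and that no essentially different elementary argument could avoid it.
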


\begin{corollary}
For any closed, orientable $n$-manifold $M^n$, there exists a Gr\"unbaum hyper-colorable triangulation of $M^n$.
\end{corollary}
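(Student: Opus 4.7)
The plan is to derive Corollary 3 as an immediate one-line consequence of Corollary 2. All of the substantive content is already packaged into Corollary 2 (which in turn rests on Theorem 1), so no additional machinery is required; the statement only needs to be unwrapped.

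Concretely, given any closed orientable $n$-manifold $M^n$, I would first fix some triangulation $T$ of $M^n$, which is available under the paper's standing convention that the manifolds considered are simplicially triangulable. Taking the first barycentric subdivision $T' = \mathrm{sd}(T)$ yields another triangulation of $M^n$; by Corollary 2, $T'$ is facet 2-colorable, and hence by Theorem 1 it is Gr\"unbaum hyper-colorable. Thus $T'$ is the required triangulation, completing the argument.

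The only step conceivably deserving comment is the a priori existence of a triangulation $T$ of $M^n$: for arbitrary topological manifolds of dimension $n \geqslant 4$ this is a delicate matter (Kirby--Siebenmann, Manolescu), but since the paper operates inside a simplicial-combinatorial framework in which triangulations are the primary object of study, this poses no genuine obstacle. I therefore expect the proof to collapse to a single sentence invoking Corollary 2 on the barycentric subdivision of any chosen triangulation of $M^n$.
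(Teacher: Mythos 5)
Your proposal is correct and matches the paper's (implicit) argument exactly: Corollary 3 is stated as an immediate consequence of Corollary 2, obtained by taking the first barycentric subdivision of any triangulation of $M^n$ and invoking its facet 2-colorability, hence Gr\"unbaum hyper-colorability via Theorem 1. Your side remark on the existence of an initial triangulation is reasonable and consistent with the paper's standing simplicial framework.
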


Thus, one of the significances of facet 2-colorable triangulations is that they provide a sufficient basis for studying topological properties of closed orientable $n$-manifolds (see also Section 6.4).

In the general dimension case, besides the barycentric subdivision, there are two other operators to generate new facet 2-colorable triangulations from old ones. Specifically, in the 2-dimensional case, the facet 2-colorable triangulations of 2-manifolds by complete graphs established in Corollary 1 can serve as a basis for generating more facet 2-colorable 2-dimensional triangulations. Let $T$ and $R$ be facet 2-colorable triangulations of given  closed $n$-manifolds $M$ and $L$, respectively.

Firstly, the binary {\it glue operator} merges a black facet of $T$ with a white facet of $R$, and then deletes the two facets together with the simplexes on the boundary complex of one of the deleted facets. Clearly, the glue operator produces a facet 2-colorable triangulation of the connected sum $M \# L$.

Secondly, another generation method uses the unary operator of {\it bipyramidal crowning.} This method works as follows. Create a new vertex, $N$. 
The ``northern cone'' over $T$ is the $(n+1)$-dimensional pyramid with base $T$ and peak $N$ which consists of all simplexes of $T$ as well as all the simplexes determined by $N$ and the simplexes of $T$. Similarly, we construct the ``southern cone'' over $T$ with peak $S$. Then we color each new facet (that is, $(n+1)$-simplex) incident with $N$ the same color as the basic $n$-face, and reverse the color for the antipodal facet incident with $S$. Then the union of the two so colored cones is a facet 2-colored triangulation of a closed $(n+1)$-manifold. 
In particular, if the base $T$  is a facet 2-colored triangulation of the $n$-sphere, the bipyramidal crown is a facet 2-colored triangulation of the 
$(n+1)$-sphere. As a more specific and geometrically nice example, the boundary complex of the $(n+1)$-cross-polytope has as 1-skeleton the complete $(n+1)$-partite graph $K_{2, \ldots, 2}$  ($n+1$  twos) and is a facet 2-colorable triangulation of the $n$-sphere.

\section{Applications of Gr\"unbaum hyper-coloring \\ and facet 2-coloring}

\medskip
\noindent {\bf 6.1. A model of the polymerized membrane.} Among the five Platonic solids in $\mathbb{R}^3$, only the tetrahedron, octahedron, and icosahedron are made of equilateral triangles. Among these three triangulations, only the octahedron is 2-face 2-colorable, which yields some desirable properties such as Gr\"unbaum colorability. Those favorable properties in turn yield the 3-colorability of the links of the regular triangular lattice as used by Bowick, Di Francesco, Golinelli, and Guitter \cite{BFGG} to model the polymerised membrane. As shown by Gottlieb and Shelton \cite{GS}, there are precisely two nonisomorphic Gr\"unbaum colorings of the octahedron.

\medskip
\noindent {\bf 6.2. From a triangulation to a quadrangulation.} If we remove all edges assigned one and the same color from a Gr\"unbaum colored 2-dimensional triangulation, we comfortably get a quadrangulation of the same 2-manifold. Furthermore, if we remove all the 2-simplexes with the same color from a Gr\"unbaum hyper-colored  triangulation of a closed 3-manifold $M^3$, we obtain a tessellation of $M^3$  that divides it into a series of  (topological) triangular bipyramids. A similar technique can be used in higher dimensions as well.  

\medskip
\noindent {\bf 6.3. Isohedral scalene realization of triangulations.} Our next application is related to the DGP (Distance Geometry Problem) \cite{LLMM}. 
We consider the following version of the DGP, still open: 
Decide whether a given triangulation $T$ of a closed $n$-manifold can be realized geometrically as an {\it isohedral scalene polyhedron} in $\mathbb {R}^{n+1}$, that is, so that (i)~all facets of $T$ are realized by congruent geometric $n$-simplexes and (ii)~the $(n-1)$-volumes of the $(n-1)$-faces of each facet are pairwise different. 
The importance of the isohedral scalene polyhedra is that their symmetry groups act freely on their facet sets. 
Perhaps the most well-known example of an isohedral scalene polyhedron is the boundary complex of a rhombic disphenoid in $\mathbb{R}^3$; it has 4 congruent scalene (2-dimensional) facets and has three pairwise different edge lengths. 

The combinatorial idea behind our approach here is that we first ``color'' the $(n-1)$-simplexes of $T$  with $n+1$  given values (fixed real numbers) instead of colors.
More specifically, we assign the $(n-1)$-simplexes of $T$ their prescribed $(n-1)$-volumes pairwise different within each facet, which is possible if and only if $T$ is Gr\"unbaum hyper-colorable. Geometrically, we need to 
assign the $(n-1)$-volumes coherently; for instance, for $n=2$, the $(n-1)$-volumes (lengths) should satisfy the triangle inequality.  

\medskip
\noindent {\bf 6.4. The 3-sphere recognition problem.} This problem is still open; see Question 12.1 \cite[p. 533]{BP}.
Casson's version \cite{Casson} of Thompson's Theorem \cite{Thompson} which relies heavily on work of Rubinstein \cite {R-2} (see also \cite{S}) states that there is an exponential time algorithm which, given a triangulation $T$, decides whether or not the carrier $|T|$  is homeomorphic to the 3-sphere. Furthermore, the 3-sphere recognition problem lies in the complexity class NP, that is, if $T$ is a triangulation of the 3-sphere then there is a polynomial sized algorithm---Schleimer's Algorithm \cite{S}---to prove this fact.

By Corollaries 2 and 3, the 3-sphere recognition problem in the orientable case has the same complexity as the problem restricted to the facet (3-face) 2-colorable 3-dimensional triangulations.
Thus, we may assume that the triangulation in question is facet 2-colorable
when applying Schleimer's Algorithm. 
It could be tempting to simplify Schleimer's Algorithm,
for example, by using the barycentric subdivision. 
However, Schleimer has himself pointed out \cite{S-2} that in the crushing phase of the algorithm, the facet 2-colorability will almost certainly be abandoned; also, the complexity function (number of facets) needs to be strictly decreasing once the process starts; so we cannot automatically recover the facet 2-coloring.
Finally, the bottom line is that Hass and Kuperberg \cite{HK} have announced the result that
the 3-sphere recognition problem lies in the complexity class coNP, modulo the generalized Riemann hypothesis, which means that NP-completeness of the problem is unlikely to be the case.

\section*{Acknowledgments}

The first author wishes to acknowledge email conversations with Saul Schleimer and is grateful for the enthusiastic discussions in person with Joachim Hyam Rubinstein on the sidelines of the First Pacific Rim Conference on Mathematics (January 19--23, 1998, City University of Hong Kong), on the subject of the 3-sphere recognition problem.

\bibliographystyle{model1-num-names}
\bibliography{<your-bib-database>}

\par\medskip 
\par\medskip 
\par\medskip 

\noindent \textsc{S.\,\,Lawrencenko} \\
\noindent Russian State University of Tourism and Service,\\
\noindent Institute of Tourism and Service (Lyubertsy),\\
\noindent Lyubertsy, Moscow Region, Russia,\\
\noindent e-mail: \url{lawrencenko@hotmail.com}\\

\noindent \textsc{M.\,N.\,\,Vyalyi} \\
\noindent National Research University Higher School of Economics,\\
\noindent Moscow, Russia,\\
\noindent e-mail: \url{vyalyi@gmail.com}\\

\noindent \textsc{L.\,V.\,\,Zgonnik} \\
\noindent Russian State University of Tourism and Service,\\
\noindent Institute of Tourism and Service (Lyubertsy),\\
\noindent Lyubertsy, Moscow Region, Russia,\\
\noindent e-mail: \url{mila.zgonnik1@yandex.ru}\\

\end{document}